\newcommand{\omitit}[1]{}
\newcommand{\derdir}[2]{{\frac{\partial{#1}}{\partial #2}}}
\newcommand{\derdirtwo}[2]{{\frac{\partial^2{#1}}{\partial{#2}^2}}}
\newcommand{\half}{{\textstyle{1\over 2}}}
\newcommand{\set}[2]{\left\lbrace #1 \; : \; #2 \right\rbrace}
\newcommand{\norm}[1]{\Vert #1 \Vert}
\newcommand{\tbnorm}[1]{\vert\kern-0.1em\vert\kern-0.1em\vert\, #1 \,\vert\kern-0.1em\vert\kern-0.1em\vert}
\newcommand{\qbnorm}[1]{\vert\kern-0.15em\vert\kern-0.15em\vert\kern-0.15em\vert\, #1 \,\vert\kern-0.15em\vert\kern-0.15em\vert\kern-0.15em\vert}
\newcommand{\Omh}{\Omega_h}
\newcommand{\mW}{\mathring{W}_h}
\newcommand{\Bh}{\mathcal{B}_h}
\newcommand{\Th}{\mathcal{T}_h}
\newcommand{\pol}{\mathcal{P}}
\newtheorem{lemma}{Lemma}
\newtheorem{theorem}{Theorem}
\newtheorem{assumption}{Assumption}
\title[Polygonally approximated]{Obtaining  higher-order Galerkin accuracy \\
when the boundary is polygonally approximated}
\date{\today}
\author[Dupont]{Todd Dupont}
\address{The University of Chicago, 
Departments of Computer Science and of Mathematics, Chicago, Illinois, 60637}
\email{dupont@cs.uchicago.edu}
\author[Guzm\'an]{Johnny Guzm\'an}
\address{Division of Applied Mathematics,
Brown University,
Box F,
182 George Street,
Providence, RI 02912}
\email{johnny\_guzman@brown.edu}
\author[Scott]{L.~Ridgway Scott}
\address{The University of Chicago, Emeritus, Chicago, Illinois, 60637}
\email{ridg@uchicago.edu}
\begin{document}

\maketitle

\begin{abstract}

We study two techniques for correcting the geometrical error associated
with domain approximation by a polygon.
The first was introduced some time ago \cite{bramble1972projection} and
leads to a nonsymmetric formulation for Poisson's equation.
We introduce a new technique that yields a symmetric formulation and
has similar performance.
We compare both methods on a simple test problem.
\end{abstract}

\section{Introduction}

When a Dirichlet problem on a smooth domain is approximated by a polygon, 
an error occurs that is suboptimal for quadratic 
approximation \cite{lrsBIBaa,lrsBIBab,lrsBIBae}.
However, this can be corrected by a modification of the variational form
\cite{bramble1972projection}.
Here we review this approach and suggest a new one.

Let $\Omega$ be a smooth, bounded, two-dimensional domain.
Consider the Poisson equation with
Dirichlet boundary conditions: 
\begin{equation}\label{eqn:simplpder}
-\Delta u=f\hbox{ in }\Omega,\quad u=g\hbox{ on }\partial\Omega.
\end{equation}
We assume that $f$ and $g$ are sufficiently smooth that $u$ can be extended to be 
in $H^{k+1}(\widehat\Omega)$, where $\widehat\Omega$ contains a neighborhood 
of the closure of $\Omega$.

One way to discretize \eqref{eqn:simplpder} is to approximate the domain
$\Omega$ by polygons $\Omega_h$, where the edge lengths of $\partial\Omega_h$
are of order $h$ in size.
Then conventional finite elements can be employed, with the Dirichlet boundary 
conditions being approximated by the assumption that $u_h=\hat g$ on 
$\partial\Omega_h$ \cite{lrsBIBgd}, with $\hat g$ appropriately defined.
For example, let us suppose for the moment that $g\equiv 0$ and we take 
$\hat g\equiv 0$ as well.
In particular, we assume that $\Omega_h$ is triangulated with a quasi-uniform
mesh $\Th$ of maximum triangle size $h$, and the boundary vertices of $\Omega_h$ are in $\partial \Omega$. We define $\mW^k:= H_0^1(\Omega) \cap W_h^k$  where 
\begin{equation*}
W_h^k=\{ v \in C(\Omh): v|_T \in \pol_k(T), \forall T \in \Th\}.
\end{equation*}
Then the standard finite element approximation finds $u_h\in \mW^k$ satisfying 
\begin{equation}\label{eqn:polyapprocirkl}
a_h(u_h,v)=(f,v)_{L^2(\Omh)},\quad \forall v\in \mW^k,
\end{equation}
where $a_h(u,v):=\int_{\Omh} \nabla u\cdot\nabla v\,dx$. Here we assume that $f$ is extended smoothly outside of $\Omega$.

This approach for $k=1$ (piecewise linear approximation) leads to the error 
estimate
$$
\norm{u-u_h}_{H^1(\Omega_h)}\leq C h \norm{u}_{H^2(\hat{\Omega})}.
$$
However, when this approach is applied with piecewise quadratic 
polynomials ($k=2$), the best possible error estimate is
\begin{equation}\label{eqn:bestpossib}
\norm{u-u_h}_{H^1(\Omega_h)}\leq C h^{3/2} ,
\end{equation}
which is less than optimal order by a factor of $\sqrt{h}$.
The reason of course is that we have made only a piecewise linear approximation
of $\partial\Omega$.
Table \ref{tabl:jpcircle} summarizes some computational experiments for the
test problem in Section \ref{circle}.
We see a significant improvement for quadratics over linears, but there 
is almost no improvement with cubics.
Moreover, we will see that a significant improvement using quadratics can 
be obtained using simple approaches that modify the variational form.

There have been many techniques introduced to circumvent the loss of accuracy
with quadratics (and higher-order piecewise polynomials)
\cite{lrsBIBae,ref:stenbergNitscheMethod}.
However, all of them require some modification of the quadrature for the elements
at the boundary.

Here we review an approach by Bramble et al. \cite{bramble1972projection} that solves directly 
on $\Omega_h$, but with a modified variational form based on the method of 
Nitsche \cite{ref:stenbergNitscheMethod}.
The method \cite{bramble1972projection} has been modified and applied in
many ways \cite{ref:CutFEMbasedonBDT}. However, the method in  \cite{bramble1972projection} leads to a non symmetric bilinear form.  Given this shortcoming we define a new method that is symmetric and solves the problem on $\Omh$ that has similar convergence results.  As we will see in the next section, one main idea in \cite{bramble1972projection} is that one uses a Taylor series of the solution near the boundary to define appropriate boundary conditions on $\partial \Omh$.  We should mention that this idea has been used recently (see for example \cite{ Cockburn2012, main}).

\begin{table}
\begin{center}
\begin{tabular}{|c|c||c|c||c|c||c|c|c|}\hline
$k$ &$M$&  L2 err&rate& H1 err&rate&seg&hmax\\
\hline
  1&  2&1.84e+00& NA&6.25e+00 & NA & 10&  1.05e+00 \\
  1&  4&2.93e-01&2.65 & 1.89e+00 & 1.73 & 20& 4.94e-01 \\
  1&  8&9.55e-02& 1.62 &1.06e+00 & 0.83 & 40& 2.61e-01 \\
  1& 16&2.47e-02& 1.95 &5.45e-01 & 0.96&  80& 1.35e-01 \\
\hline
  2&  2&4.18e-01& NA & 1.41e+00 & NA & 10& 1.05e+00 \\
  2&  4&9.44e-02& 2.15 &4.26e-01 & 1.73  &20& 4.94e-01 \\
  2&  8&2.30e-02& 2.04 &1.59e-01 & 1.42 & 40& 2.61e-01 \\
  2& 16&5.62e-03& 2.03 &5.45e-02 & 1.54 &  80& 1.35e-01 \\
\hline
  3&  2&3.17e-01&  NA &8.25e-01 & NA  & 10& 1.05e+00 \\
  3&  4&8.81e-02& 1.85  &2.94e-01 & 1.49 &20& 4.94e-01 \\
  3&  8&2.22e-02& 1.99 &1.07e-01 & 1.46& 40& 2.61e-01 \\
  3& 16&5.53e-03& 2.01 &3.82e-02 &1.49&  80& 1.35e-01 \\
\hline
\end{tabular}
\end{center}
\caption{Errors $u_h-u_I$ in $L^2(\Omega_h)$ and $H^1(\Omega_h)$,
as a function of the maximum mesh size (hmax) for the polygonal 
approximation \eqref{eqn:polyapprocirkl} for test problem 
in Section \ref{circle} using various polynomial degrees $k$.
Key: ``$M$'' is input parameter to {\tt mshr} function {\tt circle} used 
to generate the mesh, ``seg'' is the number of boundary edges.
The approximate solutions were generated using \eqref{eqn:polyapprocirkl}.}
\label{tabl:jpcircle}
\end{table}

\section{The Bramble-Dupont-Thom\'ee  approach}
\label{sec:BDTmeth}

\begin{figure}
\centerline{(a)\includegraphics[width=2.5in]{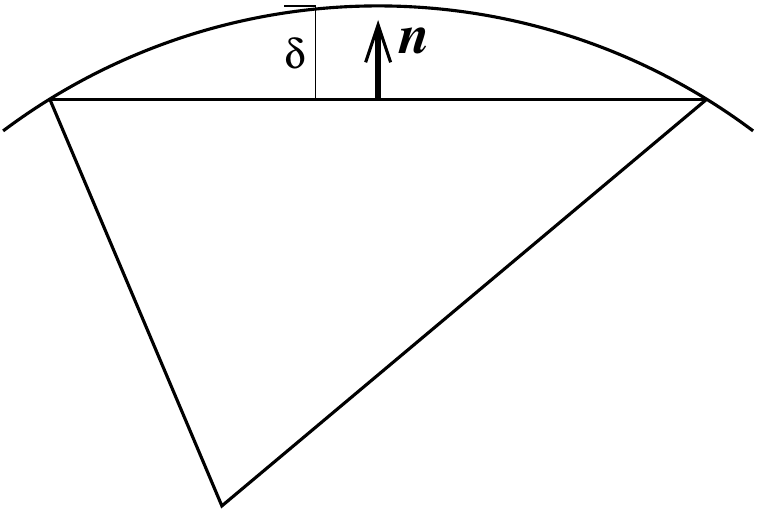}
\qquad   (b)     \includegraphics[width=2.5in]{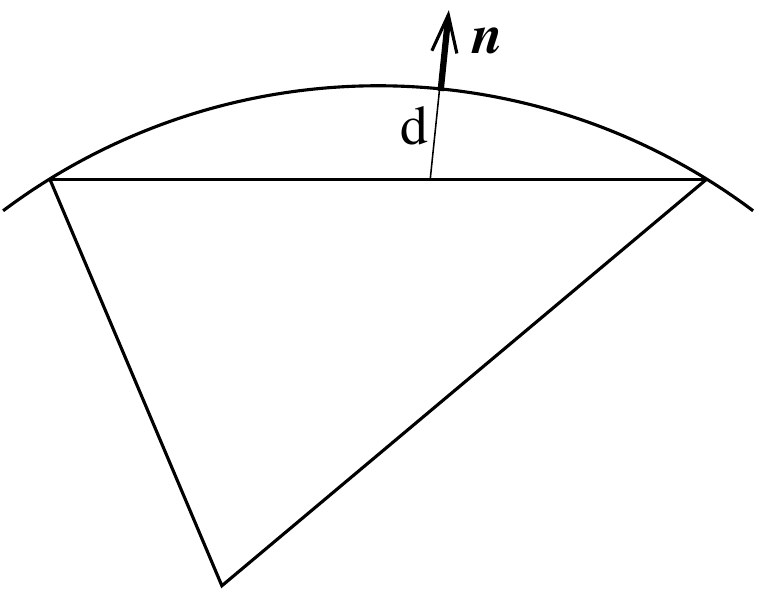}}
\caption{Definitions of (a) $\delta$ and (b) $d$.} 
\label{fig:bdryedge}
\end{figure}

The method \cite{bramble1972projection} of Bramble-Dupont-Thom{\' e}e (BDT)
achieves high-order accuracy by modifying Nitsche's 
method \cite{ref:stenbergNitscheMethod} applied on $\Omega_h$. We assume that $\Omega_h \subset \Omega$ and we do not necessarily assume that the boundary vertices of $\Omega_h$ belong to $\partial \Omega$. The bilinear form used in  \cite{bramble1972projection} is
\begin{equation}\label{eqn:toddformr}
N_h(u,v)=a_h(u,v)-\int_{\partial\Omega_h} \derdir{u}{n} v \,ds
-\int_{\partial\Omega_h} 
\Big(u+\delta\derdir{u}{n}\Big)\Big(\derdir{v}{n} -\gamma h^{-1} v\Big) \,ds
\end{equation}
Here, $n$ denotes the outward-directed normal to $\partial\Omega_h$ and
$$
\delta(x)=\min\set{s>0}{x+sn\in\partial\Omega}.
$$

Contrast the definition of $\delta$ to the closely related function $d$ defined by
$$
d(x)=\min\set{|x-y|}{y\in\partial\Omega}.
$$

For simplicity the assume that $g=0$. Then the BDT method will find $u_h \in W_h^k$ such that 
 \begin{equation*}
 N_h(u_h,v)=\int_{\Omega_h} fv\,dx \qquad \text{ for all } v \in W^k_h. 
\end{equation*}
If $\delta$ were 0, this would be Nitsche's method on $\Omega_h$.

Corrections of arbitrary order, involving terms 
$\delta^\ell\, {\frac{\partial^\ell u}{\partial n^\ell}}$ for $\ell>1$
are studied in \cite{bramble1972projection}, but for simplicity we restrict
attention to the first-order correction to Nitsche's method given in \eqref{eqn:toddformr}. The error estimates obtained in  \cite{bramble1972projection} are as follows

$$
\tbnorm{u-u_h}_1
\leq Ch^k\norm{u}_{H^{k+1}(\Omega)}+ Ch^{7/2}\norm{u}_{W^{2}_\infty(\Omega)},
$$
where
$$
\tbnorm{v}_1:=\Big(a_h(v,v) +h^{-1}\int_{\partial\Omega_h}v^2\,ds
+h\int_{\partial\Omega_h}\Big(\derdir{v}{n}\Big)^2\,ds\Big)^{1/2}.
$$
Thus using the variational form \eqref{eqn:toddformr} leads to an approximation 
that is optimal-order with quadratics and cubics and is only suboptimal for
quartics by a factor of $\sqrt{h}$.

\begin{figure}
\centerline{(a)\includegraphics[width=3.0in]{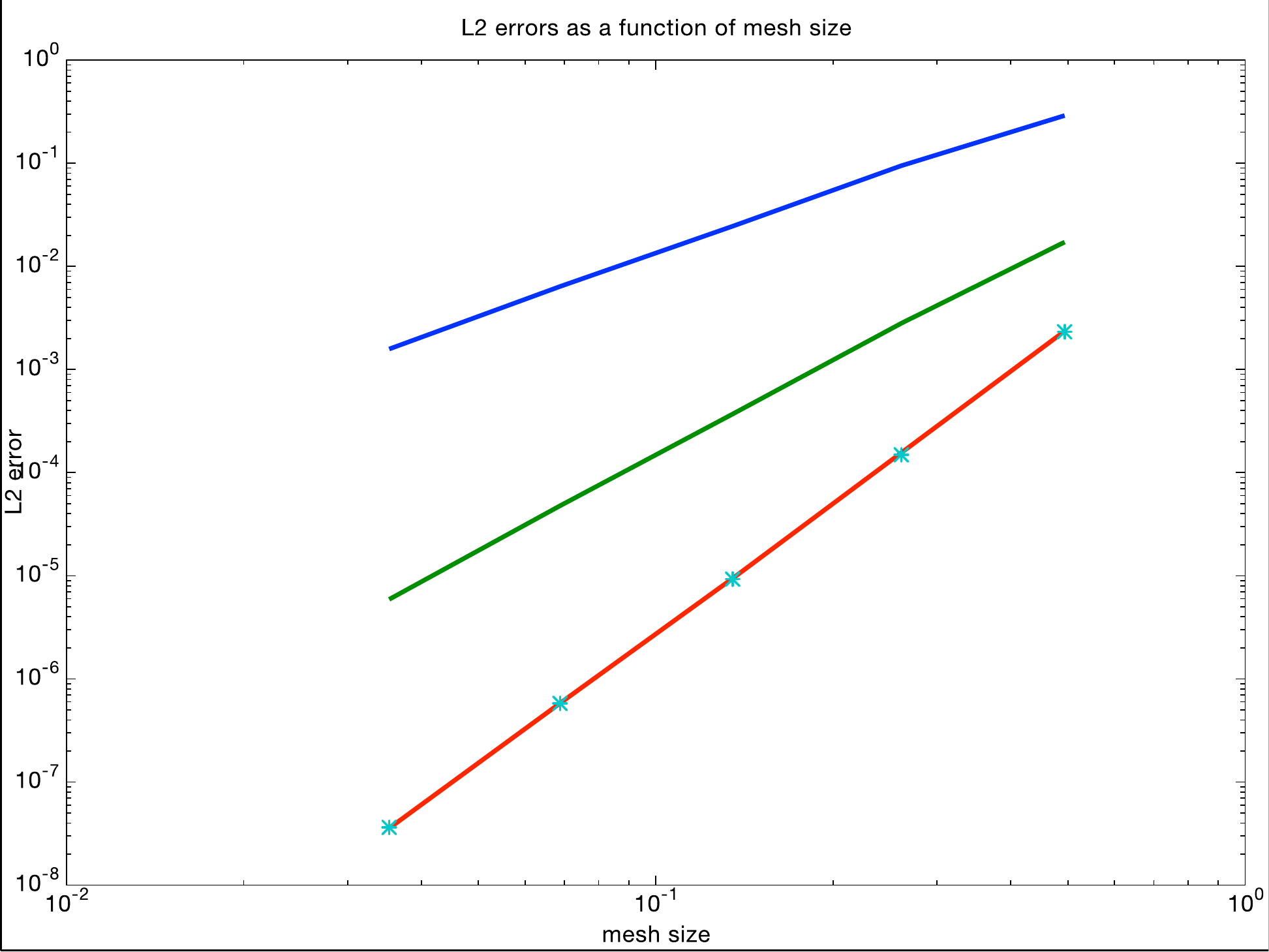}
(b)\includegraphics[width=3.0in]{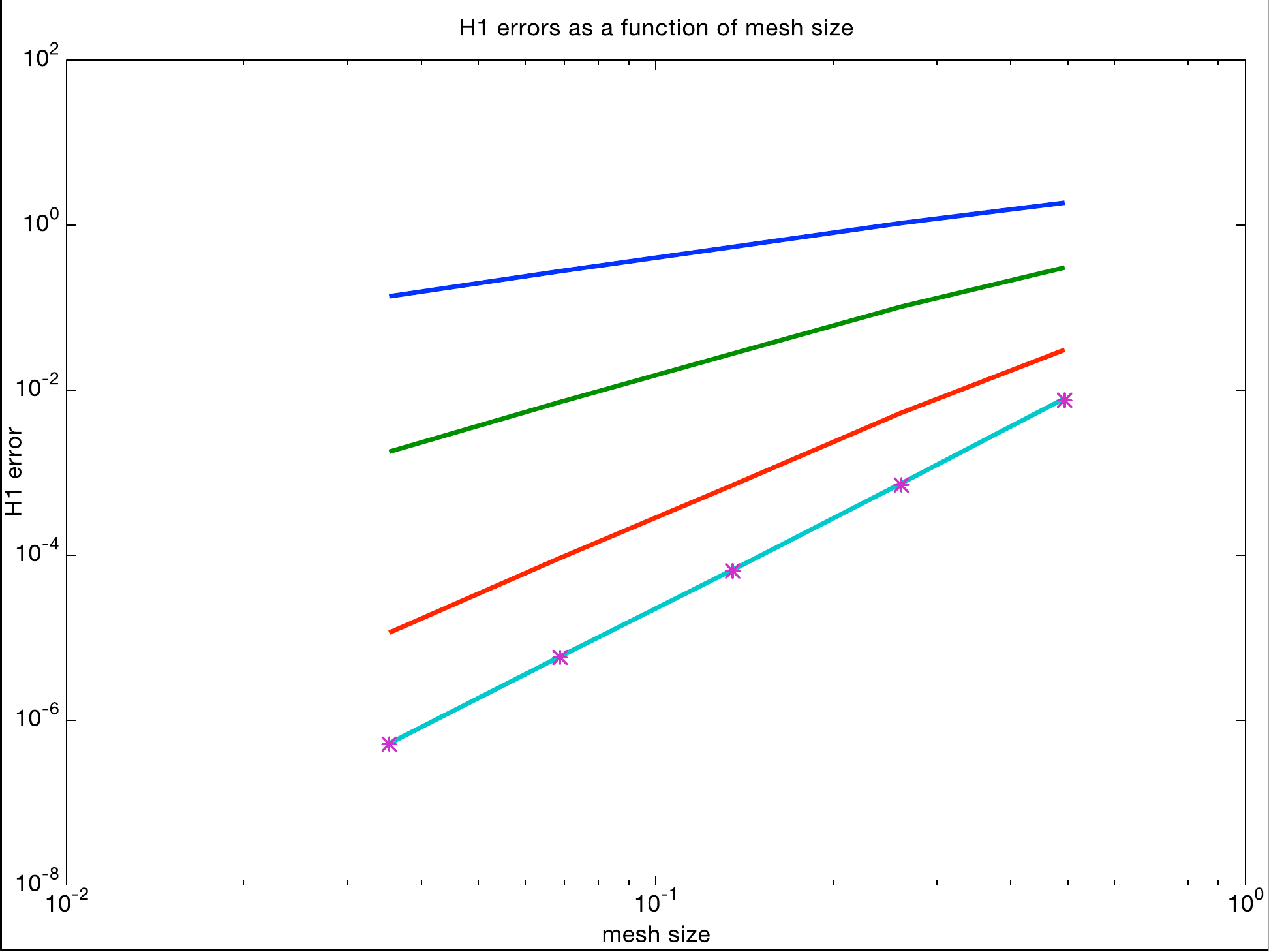}}
\caption{Errors $u_h-u_I$ in (a) $L^2(\Omega_h)$ and (b) $H^1(\Omega_h)$ as 
a function of the maximum mesh size for the BDT method with $\gamma=100$.
The asterisks indicate data for (a) $k=4$ and (b) $k=5$.}
\label{fig:plotshortbdt}
\end{figure}

\subsection{An example of a circle}\label{circle}
We consider a numerical example.  Consider the case where $\Omega$ is a disc of radius $R$ centered at the origin,
in which case we have $d(x)=R-|x|$.
However, it is more difficult to evaluate $\delta(x)$.
We have $x+\delta(x)n\in\partial\Omega$ for $x\in\partial\Omega_h$, 
where $n$ denotes the outward normal to $\Omega_h$.
We can write $x=(x\cdot n)\,n+(x\cdot t)\,t$, and
$(x\cdot t)^2=|x|^2-(x\cdot n)^2$.
Since $|x+\delta(x)n|=R$, we have
$$
R^2=(x\cdot t)^2+(x \cdot n+\delta(x))^2
 =|x|^2-(x \cdot n)^2+((x\cdot n+\delta(x))^2.
$$
Then 
$$
\delta(x)=\pm\sqrt{R^2-|x|^2+(x\cdot n)^2}-x\cdot n \, .
$$
Note that for $x\in\partial\Omega_h$, $|x|\leq R$ and $x\cdot n>0$.
Since $\delta(x)\geq 0$, we must pick the plus sign, so
$$
\delta(x)=\sqrt{R^2-|x|^2+(x\cdot n)^2}-x\cdot n \, .
$$
It is not hard to see that $d-\delta=\mathcal{O}(h^4)$ in this case.

This problem is simple to implement using the FEniCS Project code
{\tt dolfin} \cite{fenicsbook}.
We take $R=1$, $u(x,y)=1-(x^2+y^2)^3$, and $f=36(x^2+y^2)^2$ 
in the computational experiments described subsequently.
Computational results for this example are given in Table \ref{tabl:rateshortbdt}
where we see optimal order approximation for $k\leq 3$, improvement for
$k=4$ over $k=3$ (suboptimal by a factor $h^{-1/2}$), and no improvement
for quintics. These errors are depicted in Figure \ref{fig:plotshortbdt}.

\begin{table}
\begin{center}
\begin{tabular}{|c|c|c||c|c||c|c|}\hline
$k$&$M$ &hmax&  L2 error&rate& H1 error&rate\\
\hline
 1   &  8 &   0.261 &  0.0947 &  1.61 &    1.06 & 0.82 \\
 1   & 16 &   0.135 &  0.0245 &  1.95 &   0.544 & 0.96 \\
 1   & 32 &  0.0688 & 0.00639 &  1.94 &   0.277 & 0.97 \\
 1   & 64 &  0.0353 & 0.00158 &  2.02 &   0.137 &  1.02 \\
\hline
 2   &  8 &   0.261 & 2.81e-03 &  2.61 &   0.103 &  1.57 \\
 2   & 16 &   0.135 & 3.70e-04&   2.93 &  0.0277 &  1.89 \\
 2   & 32 &  0.0688 &  4.77e-05 &  2.96 & 0.00717 &  1.95 \\
 2   & 64 &  0.0353 &  5.91e-06 &  3.01 & 0.00179 &   2.00 \\
\hline
 3   &  8 &   0.261 & 1.56e-04 &  3.92 & 5.31e-03 &  2.54 \\
 3   & 16 &   0.135 &  9.44e-06 &  4.05 &  7.06e-04&   2.91 \\
 3   & 32 &  0.0688 &  5.81e-07 &  4.02 &  9.23e-05 &  2.94 \\
 3   & 64 &  0.0353 &  3.57e-08 &  4.02 &  1.15e-05 &  3.00 \\
\hline
 4   &  8 &   0.261 &  1.49e-04 &  3.96 &  7.41e-04 &  3.42 \\
 4   & 16 &   0.135 &  9.29e-06 &  4.00 &  6.63e-05 &  3.48 \\
 4   & 32 &  0.0688 & 5.80e-07 &  4.00 & 5.90e-06 &  3.49 \\
 4   & 64 &  0.0353 &  3.63e-08 &   4.00 &  5.22e-07 &  3.50 \\
\hline
 5   &  8 &   0.261 &  1.47e-04 &  3.96 & 7.10e-04&  3.41 \\
 5   & 16 &   0.135 &  9.27e-06 &  3.99 &  6.44e-05 &  3.46 \\
 5   & 32 &  0.0688 & 5.80e-07 &  4.00   &5.77e-06 &  3.48 \\
 5   & 64 &  0.0353 &  3.62e-08 &   4.00 &  5.12e-07 &  3.49 \\
\hline
\end{tabular}
\end{center}
\caption{Errors $u_h-u_I$ in $L^2(\Omega_h)$ and $H^1(\Omega_h)$ as a function of
mesh size (hmax) for the the BDT approximation in Section \ref{sec:BDTmeth},
with $\gamma=100$, for various polynomial degrees $k$.
Key: $M$ is the value of the {\tt meshsize} input parameter to the {\tt mshr}
function {\tt circle} used to generate the mesh.
The number of boundary edges was set to $5M$, and
hmax is the maximum mesh size.}
\label{tabl:rateshortbdt}
\end{table}

\section{A new method based on a Robin-type approach}

One issue with the BDT method is that the resulting linear system is not symmetric,
although it is possible to symmetrize the method as we discuss in Section \ref{sec:hoasm}.
Here we develop a technique that leads to a symmetric system.
Moreover, this method does not require the parameter(s) from Nitsche's method.
For Nitsche's method to succeed, $\gamma$ must be chosen appropriately 
\cite{lrsBIBih}.

We first separate $\partial \Omega$ to its piecewise linear part and its curvilinear part. We will assume that $\partial \Omega=\Gamma^0 \cup S_1 \cup \ldots S_\ell$ where $\Gamma^0 $ is a piecewise linear segment and $S_i's$ are $C^2$ and no where linear. We let the end points of $S_i$ to be $y_{i-1}, y_{i}$.  

For the method in this section we assume that the vertices of $\Omh$ belong to $\partial \Omega$ and hence $\Omh$ might not be a subdomain of $\Omega$. Hence, we need to define $\delta$ in this case.  We assume that for every $x \in \partial \Omh$ that is  there is a unique smallest number $\delta(x)$ in absolute value such that $\Omega \backslash \Gamma^0 $
\begin{equation*}
x+\delta(x) n(x) \in \partial \Omega.
\end{equation*}

We assume that the approximate domain boundary $\partial\Omega_h$
can be decomposed into three parts, as follows. Let $\mathcal{E}_h$ be the edges of $\partial \Omega_h$.
\begin{equation}\label{eqn:domaindec}
\Gamma^{\pm}= \cup\set{e \in \mathcal{E}_h }{\pm\delta|_{e^o}>0},
\end{equation}
where $e^o$ denotes the interior of $e$. Let $\Gamma=\Gamma^{+}\cup\Gamma^{-}$.  We assume the following.
\begin{assumption}\label{assum2}
We assume that all the vertices of $\partial \Omh$ belong to $\partial \Omega$ and that  each $y_i$ (for $0 \le i \le \ell$) is a vertex of $\partial \Omh$ . Finally, we assume that 
\begin{equation*}
\partial  \Omh=\Gamma^0 \cup \Gamma.
\end{equation*}
\end{assumption}

Our method is based on a Robin type of boundary condition on $\Gamma$. In fact, our method will be based on the closely related   problem:
 \begin{alignat*}{2}
-\Delta w=&f, \quad && \text{ on  } \Omega, \\
w=&0, \quad && \text{ on } \Gamma^0, \\
w+\delta \frac{\partial w}{\partial n}=&\hat{g}, \quad  && \text{ on } \Gamma. 
\end{alignat*}
Here we define $\hat{g}(x)= g(x+\delta(x) n(x))$ for $x \in \Gamma$ and not a vertex of $\partial \Omh$.  The key here is that, using that $u$ vanishes on $\partial \Omega$, for $x \in \Gamma$ ($x$ not a vertex of  $\partial \Omh$) we have 
\begin{equation}\label{taylor}
u(x)+\delta \frac{\partial u}{\partial n}(x)=\hat{g}(x)-\frac{\delta^2}{2} \partial_{nn} u(z),
\end{equation}
where $z$ lies in the line segment with end points $x$ and $x+\delta(x) n(x)$. 

Now we can write the method.  We start by defining the finite element space we will use
\begin{equation*}
V_h^k=\{W_h^k: v=0 \text{ on } \Gamma^0, v(x)=0 \text{ for all vertices of } x \text{ of  } \partial \Omega_h\}.
\end{equation*}
Also define 
\begin{equation*}
V_h^k(g)=\{W_h^k: v=g_I \text{ on } \Gamma^0, v(x)=Ig(x) \text{ for all vertices of } x \text{ of  } \partial \Omega_h\}.
\end{equation*}
where $g_I \in C(\partial \Omh)$ is a suitable approximation of $g$ and is a piecewise polynomial of degree  at most $k$ on $\partial \Omh$.

The bilinear form is given by
\begin{equation*}
b_h(u,v):=a_h(u,v)+c_h(u,v),
\end{equation*}
where 
\begin{equation*}
c_h(u,v)=\int_{\Gamma}\delta^{-1}{u}v \,ds.
\end{equation*}

Then the method solves: 

Find $u_h \in V_h^k(g)$ such that  
\begin{equation}\label{fem}
b_h(u_h, v)= \int_{\Omega_h} F v+ \int_{\Gamma}\delta^{-1}{\hat{g}}v \,ds. \, \quad \text{ for all } v \in V_h^k.
\end{equation}
Here
$$
F=\begin{cases} f &\hbox{on}\;\Omega \cap \Omega_h \\
I^1f &\hbox{on}\;\Omega_h \backslash \Omega,\end{cases}
$$
where $I^1$ is the linear interpolant onto $W_h^1$. Note that we can define $I^1 f$ only knowing $f$ on $\Omega$. Alternatively, if we have an analytic representation of $f$ we can define $F$ as a smooth extension of $f$ outside of $\Omega$.

\section{Error Analysis}
\label{sec:newproof}

\subsection{Stability Analysis}
Unfortunately, the bilinear form $b_h$ is not positive definite. However, we will  be able to prove stability of  method. In order to do so, we need to decompose the space $V_h^k$ into its boundary contribution and interior contribution. More precisely, we can write
\begin{equation*}
V_h^k=\mW^k \oplus \Bh^k,
\end{equation*}
where $\Bh^k =\{ v\in V_h^k: v(x)=0 \text{ for all interior Lagrange points } x \}$.  
We will define a norm on  $V_h^k$:
\begin{equation*}
\|v\|_a^2:=a_h(v,v)  
\end{equation*}
and a semi-norm 
\begin{equation*}
|v|_c^2:=\int_{\Gamma} \frac{v^2}{|\delta|}  \, ds.
\end{equation*}
Note that $|\cdot |_c$ is in fact a  norm on $\Bh^k$.

The following crucial lemma will allow us to prove stability.  
\begin{lemma}
\label{lem:lemseven}
There exists a constant $c_1>0$ such that 
\begin{equation}\label{lemma1}
\|v\|_{a} \le c_1 \sqrt{h} |v|_c \text{ for all }  v\in \Bh^k.
\end{equation}
\end{lemma}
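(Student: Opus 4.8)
The plan is to reduce the estimate to a scale-invariant inequality on a single boundary element, which becomes a finite-dimensional norm equivalence once the size of $\de$ is accounted for. First I would observe that any $v\in\Bh^k$ is supported on the union of triangles having an edge on $\Gamma$: since $v$ vanishes at every interior Lagrange point, at every vertex of $\partial\Omh$, and on $\Gamma^0$, a triangle whose only boundary contact with $\partial\Omh$ is a vertex, or whose boundary edge lies on $\Gamma^0$, carries only zero degrees of freedom, so $v\equiv 0$ there. On a triangle $T$ with an edge $e\subset\Gamma$, the restriction $v|_T$ is a degree-$k$ polynomial determined entirely by the $k-1$ edge-interior Lagrange values on $e$, since the endpoints of $e$ are vertices of $\partial\Omh$ where $v=0$ and all Lagrange points interior to $\Omh$ are zero as well.

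Second, I would record the geometric estimate $|\de(x)|\le C h^2$ for $x\in\Gamma$. This follows from Assumption~\ref{assum2}: each curved arc $S_i$ is $C^2$, the vertices of $\partial\Omh$ lie on $\partial\Omega$, and the edges of $\partial\Omh$ have length $O(h)$, so each chord lies within the usual $O(h^2)$ sagitta of the arc it subtends. Consequently $|\de|^{-1}$ in the definition of $|\cdot|_c$ is bounded below by $c\,h^{-2}$, which gives $\|v\|_{L^2(\Gamma)}^2\le C h^2\,|v|_c^2$.

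Third, and this is the heart of the argument, I would prove the scaled trace--inverse inequality $\|v\|_a\le C h^{-1/2}\|v\|_{L^2(\Gamma)}$ for $v\in\Bh^k$. Working element by element and pulling back to the reference triangle $\hat T$ by the (quasi-)affine map of size $O(h)$, the gradient energy scales in two dimensions as $\|\nabla v\|_{L^2(T)}^2\simeq \|\hat\nabla\hat v\|_{L^2(\hat T)}^2$, while the edge norm scales as $\|v\|_{L^2(e)}^2\simeq h\,\|\hat v\|_{L^2(\hat e)}^2$. On the finite-dimensional space of degree-$k$ polynomials on $\hat T$ that vanish at all Lagrange points except the edge-interior nodes of $\hat e$, both $\hat v\mapsto\|\hat\nabla\hat v\|_{L^2(\hat T)}$ and $\hat v\mapsto\|\hat v\|_{L^2(\hat e)}$ are genuine norms, since the vanishing of either forces all free degrees of freedom to vanish and hence $\hat v\equiv 0$; by equivalence of norms on a finite-dimensional space, $\|\hat\nabla\hat v\|_{L^2(\hat T)}\le C\|\hat v\|_{L^2(\hat e)}$. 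Summing over the boundary elements and undoing the scaling yields $\|v\|_a^2\le C h^{-1}\|v\|_{L^2(\Gamma)}^2$.

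Finally, chaining the last two estimates gives $\|v\|_a\le C h^{-1/2}\|v\|_{L^2(\Gamma)}\le c_1\sqrt h\,|v|_c$, as claimed. I expect the main obstacle to be the reference norm equivalence of the third step: it is precisely the defining property of $\Bh^k$ (vanishing at all interior Lagrange points) that lets the boundary-edge $L^2$ norm control the entire gradient, and one must also handle the bookkeeping for triangles meeting $\Gamma$ in more than one edge and verify that quasi-uniformity of $\Th$ makes all scaling constants uniform in $h$.
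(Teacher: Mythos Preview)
Your argument is correct and follows essentially the same route as the paper's proof: localize to triangles with an edge on $\Gamma$, pass from $\|\nabla v\|_{L^2(T)}$ to $\|v\|_{L^2(e)}$ with a loss of $h^{-1/2}$, and then absorb that loss using $|\de|\le Ch^2$ on $\Gamma$. The only difference is presentational: the paper chains the standard inverse estimate $\|\nabla v\|_{L^2(T)}\le Ch_T^{-1}\|v\|_{L^2(T)}$ with the (again inverse) step $\|v\|_{L^2(T)}\le Ch_e^{1/2}\|v\|_{L^2(e)}$ and cites these as ``inverse estimates,'' whereas you obtain the same $h^{-1/2}$ factor in one shot via an explicit finite-dimensional norm equivalence on the reference element, carefully justifying why the edge $L^2$ norm is a genuine norm on the constrained subspace.
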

\begin{proof}
Let $\mathcal{E}_h^\Gamma$ be the collection of edges that are a subset of $\Gamma$ and let $\mathcal{T}_h^\Gamma$ be triangles $T$ such that $T$ has an edge in $\mathcal{E}_h^\Gamma$. Then, if $v\in \Bh^k$ and using inverse estimates we have
\begin{equation*}
\|v\|_{a}^2=\sum_{T \in \mathcal{T}_h^\Gamma} \|\nabla v\|_{L^2(T)}^2 
\le \sum_{T \in \mathcal{T}_h^\Gamma} \frac{C}{h_T^2} \|v\|_{L^2(T)}^2 
\le \sum_{e \in \mathcal{E}_h^\Gamma} \frac{C}{h_e} \|v\|_{L^2(e)}^2.
\end{equation*}
The result is complete after we use that $\max_{x \in e} |\delta(x)| \le C h_e^2$ for $e \in \mathcal{E}_h^\Gamma$.
\end{proof}

We note that $c_h(u,v)$ may not be well defined for all $u, v \in V_h^k$. Therefore, we need to make an assumption on $\delta$ such that  this is not the case. 

\begin{assumption}\label{assumption3}
We assume that $\delta$ is such that 
\begin{equation}\label{aux321}
|c_h(u,v)| < \infty \qquad \forall u, v\in V_h^k.
\end{equation}
\end{assumption}

For example, if $\delta$ has a lower bound as follows, then  \eqref{aux321} will hold.
Suppose that the end points of $e \in \mathcal{E}_h^\Gamma$ are $x_0$ and $x_1$.
Then we assume that there exists a constant $c>0$ and a $p<3$ such that
\begin{equation*}
 |x-x_0|^p |x-x_1|^p \le c  |\delta(x)| \quad \text{ for all } x\in e,
\end{equation*}
where $c$ is independent of $e \in \mathcal{E}_h^\Gamma$. 
Under these conditions, Assumption \ref{assumption3} holds.

We can now prove the stability result. 
\begin{theorem}\label{stability}
We assume that Assumption \ref{assum2}  and Assumption \ref{assumption3} hold.  Suppose that $G$ is a bounded linear function on $V_h^k$ and suppose that $u_h \in V_h^k$ solves
\begin{equation*}
b_h(u_h, v)= G(v), \quad  \text{ for all } v \in V_h^k. 
\end{equation*}
Then, assuming $c_1 \sqrt{h} \le \frac{1}{2}$ we have 
\begin{equation*}
\|u_h\|_a \le   2 \left(\sup_{v_h \in \mW^k} \frac{|G(v_h)|}{\|v_h\|_a}\right)+ \frac{11}{3} c_1 \sqrt{h} \left(\sup_{v_h \in \Bh^k} \frac{|G(v_h)|}{|v_h|_c}\right).
\end{equation*}
and
\begin{equation*}
|u_h|_c \le  \frac{3}{2}\left(\sup_{v_h \in \mW^k} \frac{|G(v_h)|}{\|v_h\|_a}\right) + \frac{5}{3}  \left(\sup_{v_h \in \Bh^k} \frac{|G(v_h)|}{|v_h|_c}\right).
\end{equation*}
\end{theorem}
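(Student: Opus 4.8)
The plan is to establish two coupled a priori inequalities --- one controlling the interior energy and one controlling the boundary quantity $|u_h|_c$ --- and then solve the resulting $2\times 2$ system. Write $u_h=u_0+u_b$ with $u_0\in\mW^k$ and $u_b\in\Bh^k$, and abbreviate
\[
A:=\sup_{v\in\mW^k}\frac{|G(v)|}{\|v\|_a},\qquad
B:=\sup_{v\in\Bh^k}\frac{|G(v)|}{|v|_c},\qquad
\epsilon:=c_1\sqrt h\le\tfrac12 .
\]
Two elementary observations drive everything. First, every $v\in\mW^k$ vanishes on $\partial\Omh\supset\Gamma$, so $c_h(\cdot,v)=0$ and hence $b_h(w,v)=a_h(w,v)$; in particular $u_0$ carries no $c_h$-mass, so $|u_h|_c=|u_b|_c=:\gamma$. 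Second, Lemma~\ref{lem:lemseven} gives $\|u_b\|_a\le\epsilon\gamma$, so the boundary part is energetically negligible and every cross term it generates is of order $\sqrt h$.

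For the interior estimate I would test the equation with $v=u_0$. Using $c_h(u_h,u_0)=0$ and symmetry of $a_h$ gives $\|u_0\|_a^2=G(u_0)-a_h(u_b,u_0)$; bounding the right-hand side by $A\|u_0\|_a+\|u_b\|_a\|u_0\|_a$ and dividing by $\|u_0\|_a=:\alpha$ yields
\[
\alpha\le A+\epsilon\gamma .
\]

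The boundary estimate is the delicate one, because $c_h(u_b,u_b)=\int_{\Gamma^+}|\delta|^{-1}u_b^2-\int_{\Gamma^-}|\delta|^{-1}u_b^2$ is indefinite and can be as negative as $-\gamma^2$, so testing with $u_b$ itself fails to reproduce $\gamma^2$. The key idea is to test instead with the \emph{sign-flipped} function $w_b\in\Bh^k$ obtained by keeping the boundary nodal values of $u_b$ on the edges of $\Gamma^+$ and negating them on the edges of $\Gamma^-$. This is well defined and lies in $\Bh^k$ because $u_b$ vanishes at every vertex of $\partial\Omh$ (by the definition of $V_h^k$) and on $\Gamma^0$, so no nodal value --- in particular none at a $\Gamma^+/\Gamma^-$ interface, which is a vertex --- is assigned two conflicting signs. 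By construction $w_b^2=u_b^2$ on $\Gamma$, whence $|w_b|_c=\gamma$, and matching the sign of $\delta^{-1}$ edge by edge gives $c_h(u_b,w_b)=\int_{\Gamma}|\delta|^{-1}u_b^2=\gamma^2$; Lemma~\ref{lem:lemseven} also gives $\|w_b\|_a\le\epsilon\gamma$. Testing with $w_b$ and using $c_h(u_0,w_b)=0$ then yields $\gamma^2=G(w_b)-a_h(u_0,w_b)-a_h(u_b,w_b)$, and bounding each term ($|G(w_b)|\le B\gamma$, and the two $a_h$-terms by $\alpha\epsilon\gamma$ and $\epsilon^2\gamma^2$) produces
\[
(1-\epsilon^2)\gamma\le B+\epsilon\alpha .
\]

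It remains to solve the pair $\alpha\le A+\epsilon\gamma$ and $(1-\epsilon^2)\gamma\le B+\epsilon\alpha$. Substituting the first into the second gives $(1-2\epsilon^2)\gamma\le B+\epsilon A$, and feeding this back gives $\|u_h\|_a\le\alpha+\|u_b\|_a\le\alpha+\epsilon\gamma\le(A+2\epsilon B)/(1-2\epsilon^2)$, where a fortunate cancellation removes the $\epsilon^2 A$ term. Invoking $\epsilon\le\tfrac12$, so that $1-2\epsilon^2\ge\tfrac12$, and collecting terms then yields both stated bounds after the routine bookkeeping that fixes the constants $2,\tfrac{11}{3}$ and $\tfrac32,\tfrac53$ (degenerate cases $\alpha=0$ or $\gamma=0$ being trivial). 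The one genuinely nontrivial step is the boundary estimate: the indefiniteness of $c_h$ rules out ordinary coercivity, and the sign-flip test function is precisely what restores a positive, $\gamma^2$-sized pairing while keeping $|w_b|_c$ and $\|w_b\|_a$ under control.
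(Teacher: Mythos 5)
Your proposal is correct and takes essentially the same route as the paper's own proof: the same decomposition $u_h=u_0+u_b\in\mW^k\oplus\Bh^k$, the same sign-flipped test function (the paper's $\phi_h$) to convert the indefinite form $c_h$ into the positive quantity $|u_b|_c^2$, the same use of Lemma~\ref{lem:lemseven} on the cross terms, and the same coupled pair of inequalities. The only divergence is bookkeeping: your elimination yields $\|u_h\|_a\le 2A+4c_1\sqrt{h}\,B$ and $|u_h|_c\le A+2B$ (in your notation), which do not literally reproduce the stated constants $2,\ \tfrac{11}{3}$ and $\tfrac32,\ \tfrac53$; however, the paper's own substitutions, carried out exactly, give the same constants as yours, so the mismatch reflects an arithmetic slip in the paper rather than a gap in your argument.
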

\begin{proof}
We know we can write $u_h =w_h+ s_h$ where $w_h \in \mW^k$ and $s_h \in \Bh^k$.  Define $\phi_h\in \Bh^k$ by
$$
\phi_h=\begin{cases} s_h &\hbox{on}\;\Gamma^{+}\\
-s_h &\hbox{on}\;\Gamma^{-}\\
            0 &\hbox{on}\;\Gamma^0 .\end{cases}
$$
Note that $|\phi_h|_c=|s_h|_c$. Now we can estimate $s_h$.

\begin{alignat*}{1}
|s_h|_c^2=c_h(s_h, \phi_h)=b_h(u_h, \phi_h)-a_h(u_h, \phi_h)=G(\phi_h)-a_h(u_h, \phi_h). 
\end{alignat*}
Hence, we have 
\begin{alignat*}{1}
|s_h|_c^2 \le & \left(\sup_{v_h \in \Bh^k} \frac{|G(v_h)|}{|v_h|_c}\right) |\phi_h|_c+ \|u_h\|_a \|\phi_h\|_a  \\
 \le & \left(\sup_{v_h \in \Bh^k} \frac{|G(v_h)|}{|v_h|_c}\right) |s_h|_c
        + c_1 \sqrt{h} (\|w_h\|_a + c_1 \sqrt{h} |s_h|_c) |s_h|_c.
 \end{alignat*}
 
 Here we used \eqref{lemma1} twice. In particular, we used $\|u_h\|_a \le \|w_h\|_a+\|s_h\|_a \le \|w_h\|_a + c_1 \sqrt{h} |s_h|_c$. Assuming $h  c_1^2 \le \frac{1}{4}$  we have
\begin{alignat*}{1}
\frac{3}{4}|s_h|_c^2 \le \left(\sup_{v_h \in \Bh^k} \frac{|G(v_h)|}{|v_h|_c}\right) |s_h|_c
  + \frac{1}{2} \|w_h\|_a  |s_h|_c.
\end{alignat*}
Hence, 
\begin{equation}\label{341}
|s_h|_c \le \frac{4}{3}\left(\sup_{v_h \in \Bh^k} \frac{|G(v_h)|}{|v_h|_c}\right) 
+ \frac{2}{3} \|w_h\|_a
\end{equation}
Next, 
\begin{equation*}
\|w_h\|_a^2= a_h(w_h, w_h)=a_h(u_h, w_h)-a_h(s_h, w_h)=b_h(u_h,w_h)-a_h(s_h, w_h)=G(w_h)-a_h(s_h, w_h).
\end{equation*}
We therefore have 
\begin{equation*}
\|w_h\|_a^2 \le  \left(\sup_{v_h \in \mW^k} \frac{|G(v_h)|}{|v_h|_a}\right) \|w_h\|_a+ \|s_h\|_a \|w_h\|_a.
\end{equation*}
Hence, we obtain using \eqref{341}
\begin{alignat*}{1}
\|w_h\|_a \le & \left(\sup_{v_h \in \mW^k} \frac{|G(v_h)|}{\|v_h\|_a}\right) + \|s_h\|_a   \\
& \le  \left(\sup_{v_h \in \mW^k} \frac{|G(v_h)|}{\|v_h\|_a}\right) + c_1 \sqrt{h}\|s_h\|_c \\
& \le  \left(\sup_{v_h \in \mW^k} \frac{|G(v_h)|}{\|v_h\|_a}\right) +  \frac{4}{3} c_1 \sqrt{h} \left(\sup_{v_h \in \Bh^k} \frac{|G(v_h)|}{|v_h|_c}\right) + \frac{1}{3} \|w_h\|_a
\end{alignat*}
Thus we arrive at
\begin{equation*}
\|w_h\|_a \le   \frac{3}{2} \left(\sup_{v_h \in \mW^k} \frac{|G(v_h)|}{\|v_h\|_a}\right)+ 2 c_1 \sqrt{h} \left(\sup_{v_h \in \Bh^k} \frac{|G(v_h)|}{|v_h|_c}\right).
\end{equation*}

From this  and \eqref{341} we get
\begin{equation*} 
|u_h|_c=|s_h|_c \le  \frac{3}{2}\left(\sup_{v_h \in \mW^k} \frac{|G(v_h)|}{\|v_h\|_a}\right) + \frac{5}{3}  \left(\sup_{v_h \in \Bh^k} \frac{|G(v_h)|}{|v_h|_c}\right).
\end{equation*}
Finally, 
\begin{alignat*}{1}
\|u_h\|_a \le & \|w_h\|_a + \|s_h\|_a \le  \|w_h\|_a + c_1 \sqrt{h}\|s_h\|_c  \\
          \le & 2 \left(\sup_{v_h \in \mW^k} \frac{|G(v_h)|}{\|v_h\|_a}\right)
        + \frac{11}{3} c_1 \sqrt{h} \left(\sup_{v_h \in \Bh^k} \frac{|G(v_h)|}{|v_h|_c}\right).
\end{alignat*}
\end{proof}

We can now prove error estimates after we make an assumption more stringent 
than Assumption \ref{assumption3}. 
\begin{assumption}\label{assumption1}
 Suppose that the end points of $e \in \mathcal{E}_h^\Gamma$ are $x_0$ and $x_1$.
Then we assume that there exists a constant $\beta>0$ such that
\begin{equation*}
 |x-x_0| |x-x_1| \le \beta |\delta(x)| \quad \text{ for all } x \in e,
\end{equation*}
where $\beta$ is independent of $e \in \mathcal{E}_h^\Gamma$. 
\end{assumption}
Note that this assumption does not allow $\partial \Omega$ and $\partial \Omh $ to be tangent on the vertices of $\Gamma$.
Assumption \ref{assumption1} implies Assumption \ref{assumption3}; 
in particular, the example after Assumption \ref{assumption3} holds with $p=1$.

\begin{theorem}\label{errorestimates}
We assume Assumptions \ref{assum2} and \ref{assumption1} hold.  We assume that $u$ solves \eqref{eqn:simplpder}  and belongs to $u \in W^{s,\infty}(\Omega)$ where $s =\max \{k+1, 4\}$. We assume that $g_I= u_I|_{\partial \Omh}$ where $u_I \in W_h^k$ is the Lagrange interpolant of $u$.  Let $u_h \in V_h^k(g)$ solve \eqref{fem} and assume that $u$ solves \eqref{eqn:simplpder} then we have
\begin{alignat*}{1}
\|u-u_h\|_{a} \le & C h^{k}\|u\|_{H^{k+1}(\hat{\Omega})} + C h^{k+1/2} \|u\|_{W^{k+1,\infty}(\Gamma)} \\
&+C\left(h^4  \|u\|_{W^{4,\infty}(\hat{\Omega})}+ h^{7/2}  \|u\|_{W^{2,\infty}(\hat{\Omega})}\right).
\end{alignat*}
and
\begin{alignat*}{1}
|u-u_h|_{c} \le & C h^{k}\|u\|_{H^{k+1}(\hat{\Omega})} + C h^{k} \|u\|_{W^{k+1,\infty}(\Gamma)} \\
&+C\left(h^4  \|u\|_{W^{4,\infty}(\hat{\Omega})}+ h^{3}  \|u\|_{W^{2,\infty}(\hat{\Omega})}\right).
\end{alignat*}

\end{theorem}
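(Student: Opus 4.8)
The plan is to use the standard decomposition $u-u_h=(u-u_I)+(u_I-u_h)$, controlling the interpolation error $u-u_I$ directly and the discrete error $e_h:=u_I-u_h\in V_h^k$ through the stability estimate of Theorem \ref{stability}. Because $g_I=u_I|_{\partial\Omega_h}$ and the vertices lie on $\partial\Omega$ (where $u=g$), one checks that $u_I\in V_h^k(g)$, so $e_h\in V_h^k$ and Theorem \ref{stability} applies with $G(v):=b_h(e_h,v)$. Subtracting the scheme \eqref{fem} and inserting $\pm b_h(u,v)$ gives $G(v)=b_h(u_I-u,v)+E(v)$, where $E(v):=b_h(u,v)-\int_{\Omega_h}Fv-\int_\Gamma\delta^{-1}\hat g v$ is the consistency error. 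The whole proof then reduces to bounding $\sup_{\mW^k}|G(v)|/\|v\|_a$ and $\sup_{\Bh^k}|G(v)|/|v|_c$ and feeding these into Theorem \ref{stability}; a short bookkeeping check shows that the products appearing there reproduce exactly the terms claimed in each estimate (the $\sqrt h$ factor multiplying the $\Bh^k$ supremum in the $\|\cdot\|_a$ bound being what turns $h^k$ and $h^3$ into $h^{k+1/2}$ and $h^{7/2}$).

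First I would compute $E(v)$. Integrating $a_h(u,v)$ by parts and using that $v\in V_h^k$ vanishes on $\Gamma^0$ (so the boundary term lives only on $\Gamma$) gives $E(v)=\int_{\Omega_h}(-\Delta u-F)v+\int_\Gamma\big(\partial_n u+\delta^{-1}(u-\hat g)\big)v$. For the boundary part I would invoke the Taylor identity \eqref{taylor}, which rearranges to $\delta^{-1}(u-\hat g)=-\partial_n u-\tfrac{\delta}{2}\partial_{nn}u(z)$; the leading $\partial_n u$ terms cancel by design, leaving $-\tfrac12\int_\Gamma\delta\,\partial_{nn}u(z)\,v$. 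Using $|\delta|\le Ch^2$ on $\Gamma$ (as in the proof of Lemma \ref{lem:lemseven}) and Cauchy--Schwarz against the weight $|\delta|^{-1}$ bounds this by $C\big(\int_\Gamma|\delta|^3\big)^{1/2}\|u\|_{W^{2,\infty}}|v|_c\le Ch^3\|u\|_{W^{2,\infty}}|v|_c$. For the volume part, $-\Delta u-F$ vanishes on $\Omega\cap\Omega_h$, so only the sliver $\Omega_h\setminus\Omega$ (of measure $O(h^2)$) contributes, and there $|-\Delta u-F|\le Ch^2\|u\|_{W^{4,\infty}}$ since $F=I^1f$ agrees with the linear interpolant of $-\Delta u$ at the vertices of each boundary triangle $T_e$. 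For $v\in\mW^k$, which vanishes on $\partial\Omega_h$, I would estimate $\int_{\Omega_h\setminus\Omega}|v|$ edgewise using $\mathrm{meas}(T_e\setminus\Omega)\le Ch_e^3$ and the Friedrichs inequality $\|v\|_{L^2(T_e)}\le Ch_e\|\nabla v\|_{L^2(T_e)}$, then sum with Cauchy--Schwarz to get $\int_{\Omega_h\setminus\Omega}|v|\le Ch^2\|v\|_a$; hence the volume contribution is $\le Ch^4\|u\|_{W^{4,\infty}}\|v\|_a$.

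For the approximation part $b_h(u_I-u,v)=a_h(u_I-u,v)+c_h(u_I-u,v)$, the bulk term obeys $|a_h(u_I-u,v)|\le Ch^k\|u\|_{H^{k+1}}\|v\|_a$ by standard interpolation, and against $\Bh^k$ this becomes $Ch^{k+1/2}\|u\|_{H^{k+1}}|v|_c$ after Lemma \ref{lem:lemseven}. The delicate piece is $c_h(u_I-u,v)$, which I would bound by $|u_I-u|_c\,|v|_c$, so everything hinges on the weighted estimate $|u_I-u|_c\le Ch^k\|u\|_{W^{k+1,\infty}(\Gamma)}$. I expect this to be the main obstacle: the weight $|\delta|^{-1}$ is singular at the edge endpoints, and controlling it requires Assumption \ref{assumption1} (giving $|\delta(x)|^{-1}\le\beta/(|x-x_0||x-x_1|)$ and forbidding tangency) together with a Hardy-type inequality. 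Since $u_I-u$ vanishes at the endpoints $x_0,x_1$ of each edge, one has $(u_I-u)^2\le C|x-x_0||x-x_1|\,\|(u_I-u)'\|^2_{L^\infty(e)}$, so the singular weight is absorbed and $\int_e(u_I-u)^2/|\delta|\le C h_e\,h^{2k}\|u\|_{W^{k+1,\infty}(e)}^2$; summing over $e$ (total edge length $O(1)$) gives the claim.

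Finally, assembling the two suprema, $\sup_{\mW^k}|G|/\|v\|_a\le Ch^k\|u\|_{H^{k+1}}+Ch^4\|u\|_{W^{4,\infty}}$ and $\sup_{\Bh^k}|G|/|v|_c\le Ch^k\|u\|_{W^{k+1,\infty}(\Gamma)}+Ch^3\|u\|_{W^{2,\infty}}$ (plus strictly higher-order remainders coming from $a_h(u_I-u,v)$ and the volume term tested against $\Bh^k$), and substituting into Theorem \ref{stability} produces the stated bounds for $\|e_h\|_a$ and $|e_h|_c$. A concluding triangle inequality with the interpolation errors $\|u-u_I\|_a\le Ch^k\|u\|_{H^{k+1}}$ and $|u-u_I|_c\le Ch^k\|u\|_{W^{k+1,\infty}(\Gamma)}$ (the latter already established above) then yields the result.
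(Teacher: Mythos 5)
Your proposal is correct and follows essentially the same route as the paper's own proof: the decomposition $e_h=u_I-u_h$, the splitting of $G$ into the consistency part (handled via integration by parts and the Taylor identity \eqref{taylor}) and the approximation part (handled via the weighted endpoint/Hardy-type estimate under Assumption \ref{assumption1}), and the final application of Theorem \ref{stability} with the $\sqrt h$ bookkeeping. The only differences are cosmetic ones in how intermediate bounds are organized (e.g.\ your Friedrichs-plus-Cauchy--Schwarz treatment of the sliver term versus the paper's $L^\infty$/inverse-estimate chain), together with your explicit verification that $u_I\in V_h^k(g)$ and the explicit concluding triangle inequality, both of which the paper leaves implicit.
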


\begin{proof}
 We let $e_h=u_I-u_h \in V_h^k$. Then we see that
\begin{equation*}
b_h(e_h, v)=G(v) \quad \text{ for all } v \in V_h^k,
\end{equation*}
where $G(v)=G_1(v)+ G_2(v)$,  $G_1(v)=\int_{\Omh} F v dx -b_h(u, v)$ and $G_2(v)= b_h(u-u_I, v)$.

Note that  using integration by parts we have
\begin{alignat*}{1}
G_1(v)=& \int_{\Omh} F v+ \int_{\Gamma}\delta^{-1}{\hat{g}}v \,ds-\int_{\Omh} (-\Delta u) v dx - \int_{\Gamma} \left(\frac{\partial u}{\partial n} + \frac{1}{\delta} (u-\hat{g})\right) v \\
=& \int_{\Omh \backslash \Omega} (I^1 (-\Delta u)-(-\Delta u) )v dx  - \int_{\Gamma} \left(\frac{\partial u}{\partial n} + \frac{1}{\delta} (u-\hat{g})\right) v .
\end{alignat*}  

First consider $v \in \mW^k$ then we have
\begin{alignat*}{1}
|G_1(v)| \le  h^2 \|u\|_{W^{4,\infty}(\hat{\Omega})} \|v\|_{L^1(\Omh \backslash \Omega)}
\end{alignat*}

However, we have 
\begin{alignat*}{1}
 \|v\|_{L^1(\Omh \backslash \Omega)} \le& C h^2 \|v\|_{L^\infty(\Omh \backslash \Omega)} \\
  \le & C h^3 \|\nabla v\|_{L^\infty(\Omega)} \\
  \le &C \, h^2 \|\nabla v\|_{L^2(\Omega)}= C \, h^2 \| v\|_{a}.
\end{alignat*}
Therefore, we get 
 \begin{equation*}
 \sup_{v \in \mW^k} \frac{|G_1(v)|}{|v|_a} \le C h^4  \|u\|_{W^{4,\infty}(\hat{\Omega})}.
\end{equation*}
\end{proof}

Now consider $v \in \Bh^k$.
\begin{alignat*}{1}
G_1(v)=&  h^4 \|u\|_{W^{4,\infty}(\hat{\Omega})}  \|v\|_{L^\infty(\Gamma)}+\|\delta^{-1/2}( \delta \frac{\partial u}{\partial n}+ u-\hat{g})\|_{L^\infty(\Gamma)}  \|v\|_{c}  \\
\le & h^{7/2}\|u\|_{W^{4,\infty}(\hat{\Omega})}  \|v\|_{L^2(\Gamma)}+ h^3 \|u\|_{W^{2,\infty}(\hat{\Omega})}  \|v\|_{c}   \\
\le & h^{9/2}\|u\|_{W^{4,\infty}(\hat{\Omega})}  |v|_c+ h^3 \|u\|_{W^{2,\infty}(\hat{\Omega})}  \|v\|_{c}.  
\end{alignat*}
Here we used  \eqref{taylor}.

Hence,
\begin{equation*}
 \sqrt{h} (\sup_{v \in \Bh^k} \frac{|G_1(v)|}{|v|_c}) \le C \,( h^{7/2}  \|u\|_{W^{2,\infty}(\hat{\Omega})}+ h^5 \|u\|_{W^{4,\infty}(\hat{\Omega})}). 
\end{equation*}

Now lets consider $G_2$. If we let $v \in \mW^k$ then

\begin{equation*}
G_2(v)=a_h(u-u_I, v) \le \|u-u_I\|_a \|v\|_a
\end{equation*}

Hence, 
\begin{equation*}
 \sup_{v \in \mW^k} \frac{|G_2(v)|}{\|v\|_a} \le C h^{k}\|u\|_{H^k(\hat{\Omega})}.
\end{equation*}
Now let $v \in \Bh^k$ we then have 
\begin{equation*}
G_2(v)= \|u-u_I\|_a \|v\|_a + |u-u_I|_c  |v|_c \le c_1 \sqrt{h} \|u-u_I\|_a \|v\|_c + |u-u_I|_c  |v|_c.
\end{equation*}

Let  $e \in \mathcal{E}_h$, $e \subset \Gamma$ with end points $x_0$ and $x_1$. Then, we have $|(u-u_I)(x)|^2 \le C |x-x_0||x-x_1| \|\partial_t (u-u_I)\|_{L^\infty(e)} $. Hence, using Assumption \ref{assumption1} we get
\begin{equation*}
\frac{(u-u_I)^2(x)}{|\delta(x)|} \le C \beta  \|\partial_t (u-u_I)\|_{L^\infty(e)}.
\end{equation*}
Thus,
\begin{equation*}
\int_e \frac{(u-u_I)^2}{|\delta|} ds \le  C \beta |e| \|\partial_t (u-u_I)\|_{L^\infty(e)}^2.
\end{equation*}
We then obtain the following estimate, after summing over all edges $e \subset \Gamma$,
 \begin{equation*}
 |u-u_I|_c^2 \le C \|\partial_t (u-u_I)\|_{L^\infty(\Gamma)}^2.
\end{equation*}
We get the following inequality after using approximation properties of the Lagrange interpolant:
\begin{equation*}
 |u-u_I|_c \le C h^{k} \|u\|_{W^{k+1,\infty}(\Gamma)}.
\end{equation*}

Therefore, we have 
\begin{equation*}
 \sqrt{h} \sup_{v \in \Bh^k} \frac{|G_2(v)|}{|v|_c} \le   C h^{k+1/2} (\|u\|_{W^{k+1,\infty}(\Gamma)}+ \|u\|_{H^{k+1}(\hat{\Omega})}).
\end{equation*}
Combining the above results we get

\begin{equation*}
\sup_{v \in \mW^k} \frac{|G(v_h)|}{\|v_h\|_a} \le  C \left(h^{k}\|u\|_{H^{k+1}(\hat{\Omega})} + h^4  \|u\|_{W^{4,\infty}(\hat{\Omega})}\right).
\end{equation*}

\begin{alignat*}{1}
 \sqrt{h} \sup_{v \in \Bh^k} \frac{|G(v)|}{|v|_c} \le & C \,\left( h^{7/2}  \|u\|_{W^{2,\infty}(\hat{\Omega})}+ h^5 \|u\|_{W^{4,\infty}(\hat{\Omega})}\right) \\
&+  C h^{k+1/2} \left(\|u\|_{W^{k+1,\infty}(\Gamma)}+ \|u\|_{H^{k+1}(\hat{\Omega})}\right).
\end{alignat*}

The result now follows from Theorem \ref{stability}.

\section{Implementation}

One feature of Nitsche's method, that is preserved with BDT, is that
one uses the full space $W^k_h$ of piecewise polynomials without restriction
at the boundary. The modification of $W^k_h$ to obtain the space $V_h^k$ of
piecewise polynomials vanishing at boundary vertices is not trivial to
implement in automated systems like FEniCS \cite{fenicsbook}.

Thus it is of interest to consider a simplification to the Robin-type
method \eqref{fem} which removes this constraint.
Thus we define, for $\epsilon>0$,
$$
b_h^\epsilon(u,v)= a_h(u,v)
   +c_h^\epsilon(u,v),
$$
where $c_h^\epsilon(u,v):= \int_{\Gamma}(\epsilon\,\hbox{sign}(\delta)+\delta)^{-1}{u}v \,ds$.
We then define $\hat{W}_h^k= \{v \in W_h^k: v=0 \text{ on }  \Gamma^0\}$ and  $\hat{W}_h^k(g)= \{v \in W_h^k: v=g_I \text{ on }  \Gamma^0\}$.

For implementation issues we solve $u_h \in \hat{W}^k_h(g)$ by
\begin{equation}\label{eqn:epsrobimet}
b_h^\epsilon(u_h,v_h)  =\int_{\Omega_h} Fv\,dx+ c_h^\epsilon(\hat{g},v)  \quad \forall\, v\in \hat{W}^k_h.
\end{equation}
The computational experiments used this approach.
The answers do not depend on $\epsilon$ for $\epsilon$ small,
as indicated in Table \ref{tabl:epsrobin}. We were even able to have $\epsilon=0$ for \eqref{eqn:epsrobimet} using {\tt dolfin}.

\begin{table}[h]
\begin{center}
\begin{tabular}{|c|c|c|c|c||c|c|c|}
\hline
$k$  &  $M$ & segs &   hmax  &$\epsilon$&  L2 err& H1 err& bdry err \\
\hline
  2  &  64  &  320 & 3.5e-02 & 1.0e-04  & 1.1e-03  & 2.1e-03  & 1.3e-01 \\
  2  &  64  &  320 & 3.5e-02 & 1.0e-05  & 1.1e-04  & 1.8e-03  & 2.5e-02 \\
  2  &  64  &  320 & 3.5e-02 & 1.0e-06  & 1.2e-05  & 1.8e-03  & 3.2e-03 \\
  2  &  64  &  320 & 3.5e-02 & 1.0e-07  & 6.0e-06  & 1.8e-03  & 3.2e-04 \\
  2  &  64  &  320 & 3.5e-02 & 1.0e-08  & 5.9e-06  & 1.8e-03  & 4.3e-05 \\
  2  &  64  &  320 & 3.5e-02 & 1.0e-09  & 5.9e-06  & 1.8e-03  & 3.1e-05 \\
  2  &  64  &  320 & 3.5e-02 & 1.0e-10  & 5.9e-06  & 1.8e-03  & 3.1e-05 \\
\hline
  2  &  128  &  640 & 1.8e-02 & 1.0e-07  & 1.3e-06  & 4.4e-04  & 6.4e-04 \\
  2  &  128  &  640 & 1.8e-02 & 1.0e-08  & 7.3e-07  & 4.4e-04  & 6.5e-05 \\
  2  &  128  &  640 & 1.8e-02 & 1.0e-09  & 7.2e-07  & 4.4e-04  & 7.3e-06 \\
  2  &  128  &  640 & 1.8e-02 & 1.0e-10  & 7.2e-07  & 4.4e-04  & 3.9e-06 \\
  2  &  128  &  640 & 1.8e-02 & 1.0e-11  & 7.2e-07  & 4.4e-04  & 3.9e-06 \\
\hline
  2  &  256  &  1280 & 9.0e-03 & 1.0e-09  & 8.9e-08  & 1.1e-04  & 1.3e-05 \\
  2  &  256  &  1280 & 9.0e-03 & 1.0e-10  & 8.9e-08  & 1.1e-04  & 1.3e-06 \\
  2  &  256  &  1280 & 9.0e-03 & 1.0e-11  & 8.9e-08  & 1.1e-04  & 4.9e-07 \\
  2  &  256  &  1280 & 9.0e-03 & 1.0e-12  & 8.9e-08  & 1.1e-04  & 4.9e-07 \\
\hline
\end{tabular}
\end{center}
\vspace{0mm}
\caption{Errors $\norm{u_h-u_I}_{L^2(\Omega_h)}$, 
$\norm{u_h-u_I}_{H^1(\Omega_h)}$, and
$\norm{\,|\delta|^{-1/2}(u_h-u_I)}_{L^2(\partial\Omega_h)}$ as a function of $\epsilon$ and
maximum mesh size (hmax) for the Robin-like approximation \eqref{fem}
but modified as in \eqref{eqn:epsrobimet}, for piecewise quadratic polynomials ($k=2$).
Key: $M$ is the value of the {\tt meshsize} input parameter to the {\tt mshr}
function {\tt circle} used to generate the mesh;
segs is the number of boundary edges.}
\label{tabl:epsrobin}
\end{table}

\section{Computational Experiments}
\subsection{Example of a circle}
We return now to the computational test problem described in Section \ref{circle}. 
It is not difficult to show that Assumption \ref{assumption1} holds for the meshes we used. 
We see from Table \ref{tabl:raterobin} that the $H^{1}(\Omega_h)$ error is
optimal order for $k\leq 3$, consistent with Theorem \ref{errorestimates}.
In these cases, the $L^{2}(\Omega_h)$ error is also optimal order,
and the boundary error is higher order for quadratics.  For $k \ge 4$ our numerical experiments seem to predict the error
$$
\|u-u_h\|_{H^1(\Omega_h)}\approx C\big(h^{7/2}+h^k\big),
$$
which coincides with Theorem \ref{errorestimates}. 

It appears from Table \ref{tabl:raterobin} that the boundary error term
$$
\norm{\,|\delta|^{-1/2}(u-u_h)}_{L^2(\partial\Omega_h)} \approx Ch^3, \quad \text{ for all } k \ge 2, 
$$
which is consistent with Theorem \ref{errorestimates}.

\begin{figure}
\centerline{(a)\includegraphics[width=3.0in]{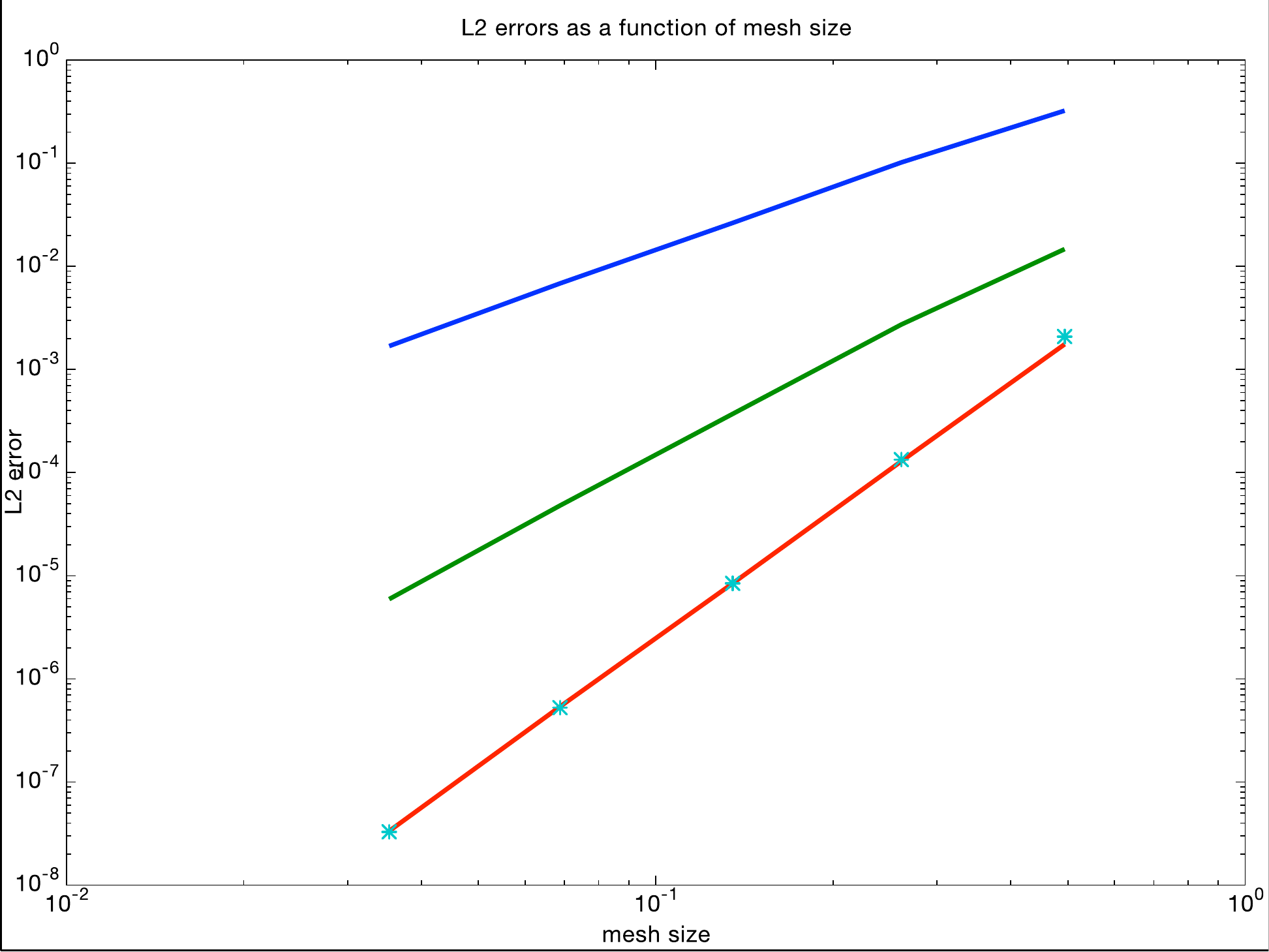}
(b)\includegraphics[width=3.0in]{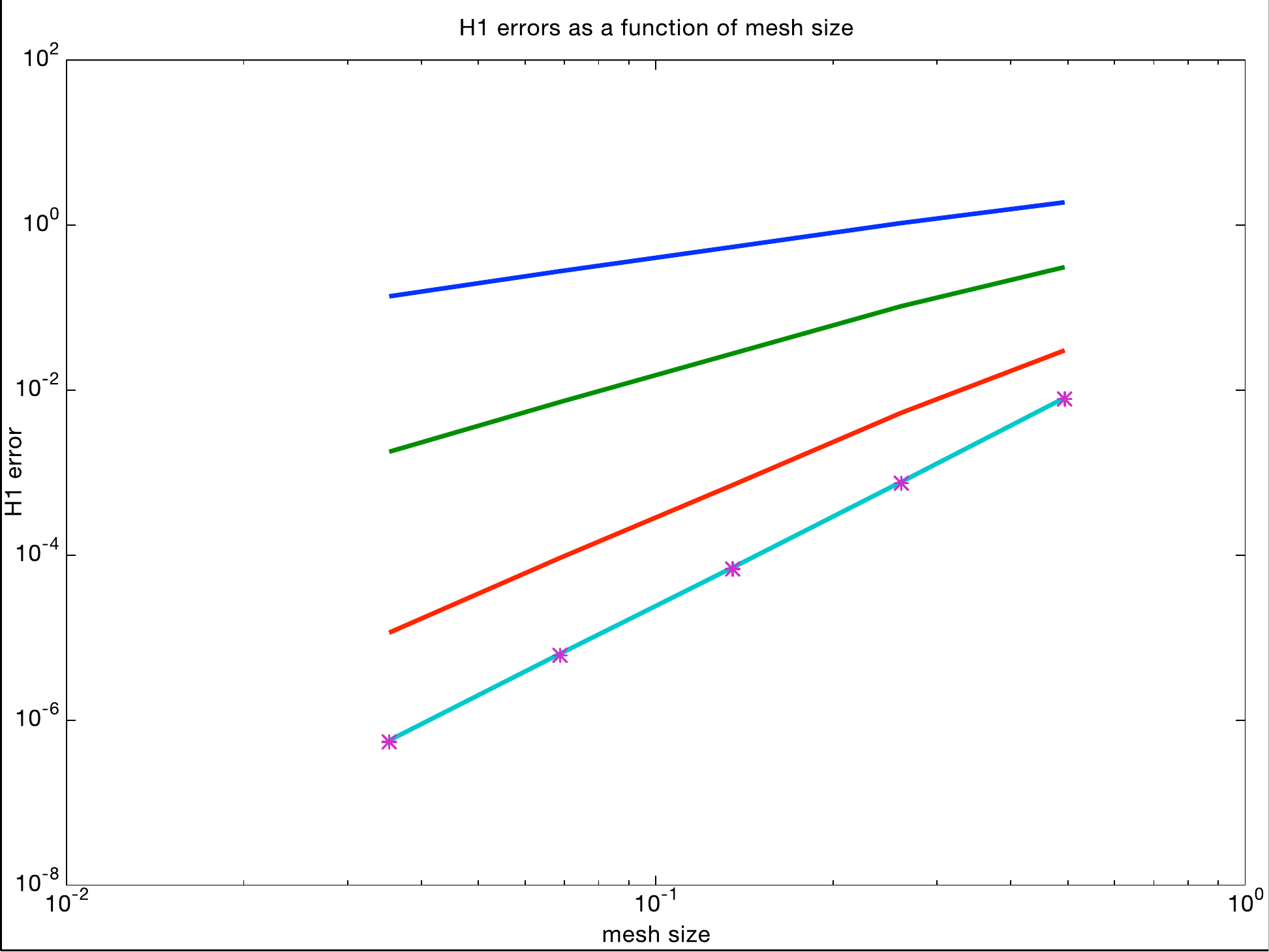}}
\caption{Errors $u_h-u_I$ in (a) $L^2(\Omega_h)$ and (b) $H^1(\Omega_h)$ 
as a function of the maximum mesh size for the  method \eqref{fem}.
The asterisks indicate data for (a) $k=4$ and (b) $k=5$.}
\label{fig:plotrobincor}
\end{figure}

\begin{table}[h]
\begin{center}
\begin{tabular}{|c|c|c||c|c||c|c||c|c|}
\hline
$k$&$M$ &hmax&  L2 error&rate& H1 error&rate&bdry err & rate\\
\hline
 1 &  16 &    0.135 &   0.0264 &   1.95 &    0.545 &  0.96 &  0.292 &  1.04  \\
 1 &  32 &   0.0688 &  0.00683 &   1.95 &    0.277 &  0.98&   0.145 & 1.01        \\
 1 &  64 &   0.0353 &  0.00169 &   2.01 &    0.137 &   1.02&  0.0724&  1.00 \\
\hline
 2 &  16 &    0.135 & 3.71e-04 &   2.88 &   0.0278 &   1.90 & 0.00177 & 2.71 \\
 2 &  32 &   0.0688 &  4.80e-05 &   2.95 &  0.00719 &  1.95& 2.52e-04 & 2.81 \\
 2 &  64 &   0.0353 & 5.94e-06 &   3.02 &  0.00179 &   2.00&  3.12e-05 & 3.02 \\
\hline
 3 &  16 &    0.135 & 8.43e-06 &   3.94 & 7.07e-04 &   2.91&  5.22e-04  &2.98 \\
 3 &  32 &   0.0688 & 5.39e-07 &   3.97 & 9.25e-05 &   2.93& 6.52e-05  &3.00   \\
 3 &  64 &   0.0353 & 3.35e-08 &   4.00 & 1.15e-05 &   3.01& 8.13e-06  &3.01 \\
\hline
 4 &  16 &    0.135 & 8.43e-06 &   3.99 & 7.07e-05 &   3.45& 5.34e-04 & 2.97 \\
 4 &  32 &   0.0688 & 5.27e-07 &   4.00 & 6.38e-06 &   3.47& 6.74e-05 &  2.99 \\
 4 &  64 &   0.0353 & 3.29e-08 &   4.00 & 5.69e-07 &   3.49&  8.47e-06 & 2.99 \\
\hline
 5 &  16 &    0.135 & 8.43e-06 &   3.99 &  6.80e-05 &   3.45 & 5.35e-04 &  2.97 \\
 5 &  32 &   0.0688 & 5.27e-07 &   4.00 & 6.11e-06 &   3.48  &6.75e-05  & 2.99 \\
 5 &  64 &   0.0353 &  3.30e-08 &   4.00 & 5.45e-07 &   3.49 & 8.47e-06  & 2.99\\
\hline
\end{tabular}
\end{center}
\vspace{0mm}
\caption{Errors $\norm{u_h-u_I}_{L^2(\Omega_h)}$, 
$\norm{u_h-u_I}_{H^1(\Omega_h)}$, and
$\norm{\,|\delta^{-1/2}(u_h-u_I)}_{L^2(\partial\Omega_h)}$ as a function of
mesh size (hmax) for the method  \eqref{eqn:epsrobimet}
for various polynomial degrees $k$. The fudge factor $\epsilon$ was taken to be $10^{-13}$.
Results were insignificantly different for smaller values.
Key: $M$ is the value of the {\tt meshsize} input parameter to the {\tt mshr}
function {\tt circle} used to generate the mesh.
The number of boundary edges was set to $5M$, and
hmax is the maximum mesh size.}
\label{tabl:raterobin}
\end{table}

\subsection{An example with $\delta<0$}
\label{sec:testprobis}

Now consider the case where $\Omega$ is a disc of radius $1$ centered at 
the origin, having a concentric disc of radius $R<1$ removed. Again, it is not difficult to show that Assumption \ref{assumption1} holds for our meshes.

For boundary value problem, we take $R=\half$ and $-\Delta u=f$, with
$$
u(x,y)=(x^2+y^2) -5(x^2+y^2)^2+4(x^2+y^2)^3, 
\qquad f=-4 +80 (x^2+y^2) -144 (x^2+y^2)^2
$$
in the computational experiments described in Table \ref{tabl:rateshortrobin}.
Note that $u$ vanishes on both boundary arcs. Note that the error estimates are consistent with Theorem \ref{errorestimates}.

\begin{table}[h]
\begin{center}
\begin{tabular}{|c|c|c|c|c|c|}\hline
$k$  &  $M$ &  hmax & L2 error & H1 error& bdry error\\
\hline
  2  &  16  & 0.132 & 8.76e-04  & 6.87e-02  & 1.39e-04 \\  
  2  &  32  & 0.070 & 1.20e-04  & 1.84e-02  & 9.64e-06 \\  
  2  &  64  & 0.036  & 1.54e-05  & 4.68e-03  & 6.51e-07 \\  
\hline
  3  &  16  & 0.132 & 2.90e-05  & 2.29e-03  & 6.59e-05 \\  
  3  &  32  & 0.070 & 1.89e-06  & 3.07e-04  & 4.13e-06 \\  
  3  &  64  & 0.036  & 1.17e-07  & 3.93e-05  & 2.47e-07 \\  
\hline
  4  &  16  & 0.132  & 2.23e-05 & 3.37e-04  & 7.24e-05 \\  
  4  &  32  & 0.070 & 1.39e-06 & 2.97e-05  & 4.57e-06 \\  
  4  &  64  & 0.036 & 8.10e-08 & 2.61e-06  & 2.76e-07 \\  
\hline
\end{tabular}
\end{center}
\vspace{0mm}
\caption{Errors $u_h-u_I$ measured in $L^2(\Omega_h)$ (L2 error), 
$H^1(\Omega_h)$ (H1 error), and $L^2(\partial\Omega_h)$ (bdry error)
as a function of mesh size (hmax) for the the Robin approximation 
in \eqref{eqn:epsrobimet}, for selected polynomial degrees $k$.
$\epsilon=10^{-9}$.
Key: $M$ is the value of the {\tt meshsize} input parameter to the {\tt mshr}
function {\tt circle} used to generate the mesh.
The number of boundary edges for the outer boundary was set to $4M$, and
the number of boundary edges for the inner boundary was set to $2M$.}
\label{tabl:rateshortrobin}
\end{table}

\section{Boundary layers}

It is natural to expect the error with various boundary approximations might be 
limited to a boundary layer, with the interior error of a smaller magnitude.
Our observations indicate something like this, but the behavior is more complex.
In Figure \ref{fig:bdrywhat}, we see two computations done on the same mesh
based on a triangulation of $\Omega_h$ with $\partial\Omega_h$ having 80 segments
and using piecewise-quadratic approximation.
In Figure \ref{fig:bdrywhat}(a), we see the simple polygonal approximation \eqref{eqn:polyapprocirkl}.
In this case, the error is somewhat larger near the boundary, but it does
not decay to zero in the interior.
Thus there is a significant pollution effect away from the boundary.
On the other hand, Figure \ref{fig:bdrywhat}(b) shows what happens if the
Robin-like method \eqref{fem}.
Now we see that the error does decay towards zero in the interior, with the
majority of the error concentrated at the boundary.

\begin{figure}
\centerline{(a)\includegraphics[width=3.0in]{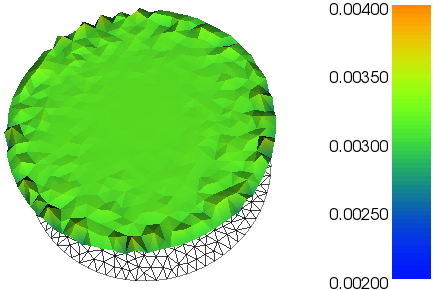} \quad    (b)
\hspace{-9pt} \includegraphics[width=3.2in]{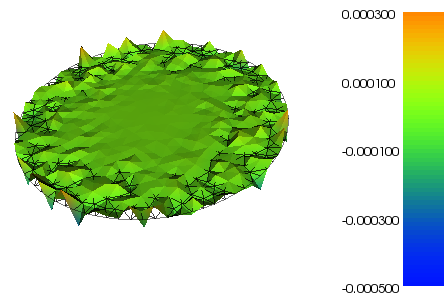}}
\caption{Error with piecewise quadratics 
on a mesh with $\partial\Omega_h$ having 80 segments.
The mesh is drawn in the plane corresponding to zero error.
(a) The method \eqref{eqn:polyapprocirkl}, no boundary integral corrections.
The error is uniformly positive.
(b) The Robin-like method \eqref{fem}.
The error oscillates around zero.
Note the factor of ten difference in scales in the error plots.}
\label{fig:bdrywhat}
\end{figure}

\section{Higher order and symmetric methods}
\label{sec:hoasm}

The Robin-type method presented in the previous section is at most of $O(h^{7/2})$. 
High-order methods using the same techinique do not lead to symmetric systems. 
For simplicity assume that $g \equiv 0$. Using that
$$
\Big|u|_{\partial\Omega_h}+\delta\derdir{u}{n}\big|_{\partial\Omega_h}
+\frac{\delta^2}{2}\derdirtwo{u}{n}\big|_{\partial\Omega_h}\Big|
       \leq C\delta^3\norm{u}_{W^3_\infty(\Omega)},
$$
we define
\begin{equation}\label{eqn:presecndordr}
b_h(u,v)= a_h(u,v)+\int_{\partial\Omega_h}\delta^{-1}{u}v \,ds
+\int_{\partial\Omega_h} \frac{\delta}{2}\derdirtwo{u}{n}v \,ds .
\end{equation}
Unfortunately, $b_h$ is not symmetric.

One way to have higher-order, symmetric methods is by symmetrizing 
the approach of Bramble-Dupont-Thom\'ee. 
Recall that Bramble et al. \cite{bramble1972projection} developed arbitrary order methods,
but that the bilinear forms are not symmetric.  
The lowest order method was presented in Section \ref{sec:BDTmeth} where the bilinear $N_h$ 
is given by \eqref{eqn:toddformr}.    
One way to symmetrize $N_h$ and mainting the same convergence rates is by introducing the 
bilinear form:
\begin{equation*}
M_h(u,v)=N_h(u,v) 
+ \int_{\partial\Omega_h} \gamma \delta h^{-1} \derdir{v}{n} \Big(u+\delta \derdir{u}{n}\Big) \,ds.
\end{equation*}
This is precisely what is done in \cite[(2.31)]{ref:CutFEMbasedonBDT}.
We see that
\begin{alignat*}{1}
M_h(u,v)=&a_h(u,v)+\int_{\partial\Omega_h} \Big(\gamma \frac{\delta}{h}-1\Big)
 \Big( \delta  \derdir{u}{n} \derdir{v}{n} + \derdir{u}{n} v+\derdir{v}{n} u\Big) \,ds
 + \frac{\gamma}{h} \int_{\partial\Omega_h} u v \,ds.
\end{alignat*}
Note that $M_h$ is symmetric. We will investigate this and similar methods in the near future.


\begin{thebibliography}{1}

\bibitem{lrsBIBaa}
A.~Berger, R.~Scott, and G.~Strang.
\newblock Approximate boundary conditions in the finite element method.
\newblock {\em Symposia Mathematica}, 10:295--313, 1972.

\bibitem{bramble1972projection}
James~H. Bramble, Todd Dupont, and Vidar Thom{\'e}e.
\newblock Projection methods for {Dirichlet's problem} in approximating
  polygonal domains with boundary-value corrections.
\newblock {\em Mathematics of Computation}, 26(120):869--879, 1972.


\bibitem{lrsBIBgd}
Susanne~C. Brenner and L.~Ridgway Scott.
\newblock {\em The Mathematical Theory of Finite Element Methods}.
\newblock Springer-Verlag, third edition, 2008.

\bibitem{ref:CutFEMbasedonBDT}
Erik Burman, Peter Hansbo, and Mats~G. Larson.
\newblock A cut finite element method with boundary value correction.
\newblock {\em Mathematics of Computation}, 2017.

\bibitem{Cockburn2012}
 Bernardo Cockburn and Manuel Solano.
 \newblock Solving Dirichlet boundary-value problems on curved domains by extensions from subdomains. 
 \newblock {\em SIAM J. Sci. Comput}, 34 (1), no. 1: A497--A519, 2012.


\bibitem{ref:stenbergNitscheMethod}
Mika Juntunen and Rolf Stenberg.
\newblock Nitsche's method for general boundary conditions.
\newblock {\em Mathematics of Computation}, 78(267):1353--1374, 2009.

\bibitem{fenicsbook}
A.~Logg, K.A. Mardal, and G.~Wells.
\newblock {\em Automated Solution of Differential Equations by the Finite
  Element Method: The {FEniCS} Book}.
\newblock Springer, 2012.


\bibitem{main}
A. Main and G.  Scovazzi. 
\newblock  The shifted boundary method for embedded domain computations. Part I: Poisson and Stokes problems. 
\newblock {\em J. Comput. Phys.} 372, 972--995: 2018.

\bibitem{lrsBIBih}
L.~Ridgway Scott.
\newblock {\em Introduction to Automated Modeling with FEniCS}.
\newblock Computational Modeling Initiative, 2018.

\bibitem{lrsBIBab}
R.~Scott.
\newblock {\em Finite element techniques for curved boundaries}.
\newblock PhD thesis, Massachusetts Institute of Technology, 1973.

\bibitem{lrsBIBae}
R.~Scott.
\newblock Interpolated boundary conditions in the finite element method.
\newblock {\em SIAM J. Numer. Anal.}, 12:404--427, 1975.

\end{thebibliography}
\end{document}